\newcommand{\E}[1]{\ensuremath{\mathbb{E} \left[#1 \right]}}
\newcommand{\Prob}[1]{\ensuremath{\mathbb{P} \left(#1 \right)}}
\newcommand{\I}[1]{\ensuremath{\mathbbm{1}_{ \{ #1 \} }}}
\newcommand{\R}{\ensuremath{\mathbb{R}}}
\newcommand{\convdist}{\ensuremath{\stackrel{d}{\longrightarrow}}}
\newcommand{\equidist}{\ensuremath{\stackrel{d}{=}}}
\renewcommand{\d}{\mathbbm{d}}
\newcommand{\m}{\mathbbm{m}}
\newcommand{\Del}[1]{\ensuremath{\Delta^{\!(\mathbf{#1})}}}
\newcommand{\fix}{\mathfrak{F}}
\newenvironment{itemize*}%
  {\vspace{-0.3cm}%
  \begin{itemize}%
    \setlength{\itemsep}{0pt}%
    \setlength{\parskip}{0pt}}%
  {\end{itemize}}
\newenvironment{enumerate*}%
  {\vspace{-0.3cm}%
  \begin{enumerate}%
    \setlength{\itemsep}{0pt}%
    \setlength{\parskip}{0pt}}%
  {\end{enumerate}}
\newtheorem{thm}{Theorem}[section]
\newtheorem{defn}[thm]{Definition}
\newtheorem{prop}[thm]{Proposition}
\begin{document}
\title[The Brownian CRT as a fixed point]{The Brownian continuum random tree as the unique solution to a fixed point equation}
\author{Marie Albenque}
\address{CNRS, LIX -- UMR 7161, \'Ecole Polytechnique, France.}
\author{Christina Goldschmidt }
\address{Department of Statistics and Lady Margaret Hall, University of Oxford, UK.}

\begin{abstract}
In this note, we provide a new characterization of Aldous' Brownian continuum random tree as the unique fixed point of a certain natural operation on continuum trees (which gives rise to a recursive distributional equation).  We also show that this fixed point is attractive.
\end{abstract}
\maketitle

\section*{Introduction}
\noindent The Brownian continuum random tree (BCRT), which was introduced and first studied by
Aldous~\cite{AldousCRT1, AldousCRT2, AldousCRT3}, is the
prototypical example of a random $\R$-tree/continuum random tree.  Its importance derives from the fact that it is the scaling limit of a large class of discrete trees including: all critical Galton--Watson trees with finite offspring variance~\cite{AldousCRT1,AldousCRT3}, unordered binary trees~\cite{MarckertMiermont}, uniform unordered trees~\cite{HaasMiermont2}, uniform unlabelled unrooted trees \cite{Stufler}, critical multitype Galton--Watson trees~\cite{Miermont} and random trees with a prescribed degree sequence satisfying certain conditions~\cite{BroutinMarckert}. It is also the scaling limit of random dissections~\cite{CurienHaasKortchemski} and random graphs from subcritical classes~\cite{PanagiotouStuflerWeller}.

Many of these convergence results are proved using some sort of functional coding.  However, particularly in the case of unordered trees, a natural functional coding whose distributional properties are easily understood is not always available.  In such settings, an alternative approach is desirable.

By a \emph{recursive distributional equation} for a random variable $X$ taking values in some Polish space $\mathcal{S}$, we mean an equation of the form
\begin{equation} \label{eqn:generalRDE}
X \equidist f((\xi_i, X_i), i \ge 1),
\end{equation}
where $X_1, X_2, \ldots$ are i.i.d.\ copies of $X$, independent of the family of random variables $(\xi_i)_{i \ge 1}$, and $f$ is a suitable $\mathcal{S}$-valued function.  We can, of course, think of this equation in terms of probability distributions: if $\mu$ is the distribution of $X$ and $F(\mu)$ is the distribution of the right-hand side then $\mu$ is a fixed point of the operator $F$.

For families of random variables which satisfy a natural recursive distributional equation, the so-called \emph{contraction method} has been demonstrated to be a powerful tool for proving convergence results.  Suppose that $(M_n)$ is a sequence of distributions for which we wish to prove that there exists a limit $M$.  The basic idea is as follows.  Suppose that $M_n$ can be described recursively in terms of $M_m$ for $m < n$.  This equation often allows one to guess a limiting version, in which $M$ is described in terms of itself.  In other words, $M$ should be the fixed point of some operator $\fix$.  Suppose that, in addition, $\fix$ is a contraction in a suitable metric on the space of probability measures.  Then Banach's fixed point theorem tells us that there exists a fixed point and that $M_n \to M$ as $n \to \infty$ in the sense of that metric.

This is straightforward in principle, but usually the recursive 
equation for $M_n$ does not have precisely the same form as the limiting operator.
Moreover, finding a metric in which $\fix$ is a contraction (but which also
yields weak convergence) is often highly non-trivial.  In practice, this method
has been applied very successfully for sequences of random variables (see, for
example, \cite{Rosler92,RoslerRuschendorf,NeiningerRuschendorf}), but so far there is only one result for the more complicated setting of convergence of stochastic processes \cite{RalphHenning}.

It is often the case that families of discrete trees have a recursive definition
or description.  Aldous~\cite{AldousRecursiveSelfSimilarity} proved that the BCRT is a fixed point for a
natural operation on continuum trees. With these two facts in mind, it is
natural to ask if a contraction method can be established for random trees.  This seems an
ambitious aim, and there are several technical issues to be overcome (not least
the choice of metric).  But our original motivation stems from the fact that, if
such a principle were to be established, then the characterization of possible
limits should be the first step.  In this article, we prove that the BCRT is the unique fixed point of an appropriate operator, and that this fixed point is attractive for a certain natural class of measures on continuum trees.

The rest of this note is organised as follows.  In Section 1, we provide an overview of the various definitions of the BCRT which already exist in the literature.  This also enables us to introduce various concepts we will need in the sequel.  We then set up our fixed point equation.  In Section 2, we prove that it has a unique solution. In Section 3, we show that repeatedly applying the fixed point operator to any suitable law on continuum trees gives convergence to the law of the BCRT in the sense of the Gromov--Prokhorov topology. Section 4 contains some concluding remarks.

\section{Overview of definitions of the BCRT}
\noindent We begin by introducing the notion of an $\R$-tree. 
\begin{defn}
A compact metric space $(T,d)$ is a \emph{$\R$-tree} if for all $x,y \in T$\vspace{-0.3cm}
\begin{itemize}
\item there exists a unique geodesic from $x$ to $y$ i.e.\ there exists a unique isometry $f_{x,y}: [0, d(x,y)] \to T$ such that $f_{x,y}(0) = x$ and $f_{x,y}(d(x,y)) = y$.  The image of $f_{x,y}$ is called $\llbracket x,y \rrbracket$;
\item the only non-self-intersecting path from $x$ to $y$ is $\llbracket x,y \rrbracket$ i.e.\ if $q: [0,1] \to T$ is continuous and injective and such that $q(0) = x$ and $q(1) = y$ then $q([0,1]) = \llbracket x,y \rrbracket$.
\end{itemize}
\end{defn}
An element $x \in T$ is called a \emph{vertex}.  A \emph{rooted $\R$-tree} is an $\R$-tree $(T,d)$ with a distinguished vertex $\rho$ called the \emph{root}.  The \emph{height} of a vertex $x$ is $d(\rho,x)$.  The \emph{degree} $\mathrm{deg}(x)$ of a vertex $x$ is the number of connected components of $T \setminus \{x\}$.  By a \emph{leaf}, we mean a vertex of degree 1; write $\mathcal{L}(T)$ for the set of leaves of $T$.  The tree $T$ is \emph{leaf-dense} if $T$ is the closure of $\mathcal{L}(T)$.  We will often want to endow an $\R$-tree with a Borel probability measure ($\mu$, say), which allows us to pick random points in the tree.

A \emph{measured metric space} $(X,d,\mu)$ is a complete metric space $(X,d)$ equipped with a Borel probability measure $\mu$ (with respect to the metric $d$) on $X$.  Define a first equivalence relation by declaring two such spaces $(X,d,\mu)$ and $(X',d',\mu')$ to be \emph{GHP-equivalent} if there exists an isometry $f: X \to X'$ such that the image of $\mu$ under $f$ is $\mu'$.  Let $\mathcal{S}$ denote the space of GHP-equivalence classes of compact measured metric spaces.  Then $\mathcal{S}$ is Polish when endowed with the Gromov--Hausdorff--Prokhorov topology \cite{AbrahamDelmasHoscheit}.  Define a second equivalence relation by declaring $(X,d,\mu)$ and $(X',d',\mu')$ to be \emph{GP-equivalent} if there exists an isometry $g: \text{supp}(\mu) \to X'$ such that the image of $\mu$ under $g$ is $\mu'$, where $\text{supp}(\mu)$ denotes the topological support of $\mu$.  Let $\mathcal{S}'$ denote the space of GP-equivalence classes of compact measured metric spaces.  Then $\mathcal{S}'$ is Polish when endowed with the Gromov--Prokhorov topology \cite{GrevenPfaffelhuberWinter}.

\subsection{The BCRT as an $\R$-tree encoded by a Brownian excursion}
\noindent A standard way to generate $\R$-trees is via functional encoding. Suppose that $h: [0,\infty) \to [0, \infty)$ is a continuous function of compact support such that $h(0) = 0$.  Use it to define a pseudo-metric $\tilde{d}$ by
\[
\tilde{d}(x,y) = h(x) + h(y) - 2 \inf_{x\wedge y \leq t \leq x \vee y} h(t), \qquad x,y \ge 0.
\]
Define an equivalence relation $\sim$ by letting $x \sim y$ if $\tilde{d}(x,y) = 0$.  Let $T = [0,\infty)/\sim$, denote by $\tau: [0,\infty) \to T$ the canonical projection and let $d$ be the metric induced on $T$ by $\tilde{d}$.  If $\sigma$ is the supremum of the support of $h$ then note that $\tau(s) = 0$ for all $s \geq \sigma$.  This entails that $T = \tau([0,\sigma])$ is compact.  The metric space $(T,d)$ can then be shown to be an $\R$-tree (see Le Gall~\cite{LeGallSurvey}).  
The tree $T$ can be naturally rooted at $\rho = \tau(0)$, the equivalence class of 0, and we will sometimes think of it as a rooted object and sometimes not. There is a natural measure $\mu$ on $T$ given by the push-forward of the uniform distribution on $[0,\sigma]$ under the projection $\tau$.

We define the BCRT $(\mathbb{T},\d)$ to be the $\R$-tree encoded by 
\[
h(t) = \begin{cases}
          2e(t) & 0 \leq t \leq 1, \\
          0 & t > 1,
          \end{cases}
\]
where $(e(t), 0 \le t \le 1)$ is a standard Brownian excursion. We usually endow $(\mathbb{T},\d)$ with the probability measure $\m$ which is the push-forward of the Lebesgue measure on $[0,1]$.  

\subsection{The BCRT as a limit of discrete trees}
\noindent Let $T_n$ be the ordered rooted tree representing the genealogy of a Galton--Watson branching process with offspring distribution having mean 1 and variance $\sigma^2 \in (0, \infty)$.  Think of $T_n$ as a metric space by endowing it with the graph distance $d_{\mathrm{gr}}$ (which puts neighbouring vertices at distance 1). Let $\mu_{n}$ be the uniform measure on the vertices of $T_{n}$. Then
\[
(T_n, n^{-1/2}d_{\mathrm{gr}},\mu_{n})\convdist (\mathbb{T},\sigma^{-1}\d,\m)
\]
as $n \to \infty$, in the Gromov--Hausdorff--Prokhorov sense.  (The convergence in distribution is originally due to Aldous \cite{AldousCRT1}, although this formulation is closer to that of Le Gall \cite{LeGall06}.)

\subsection{The BCRT via random finite-dimensional distributions}\label{sec:findim}
\noindent We may also characterize the BCRT as the unique continuum random tree having certain distributional properties.  We must first introduce properly what we mean by a continuum tree.

\begin{defn}
A \emph{continuum tree} is a triple $(T,d, \mu)$ where $(T,d)$ is an (unrooted) $\R$-tree and $\mu$ is a Borel probability measure on $T$ which is non-atomic and satisfies
\begin{itemize}
\item $\mu(\mathcal{L}(T)) = 1$;
\item for every $v \in T$ of degree $k = \mathrm{deg}(v) \ge 2$, let $T_1, \ldots, T_k$ be the connected components of $T \setminus \{v\}$; then $\mu(T_i) > 0$ for all $1 \le i \le k$.
\end{itemize}
\end{defn}

The set of continuum trees can naturally be endowed with the Gromov-Hausdorff-Prokhorov topology, as briefly discussed at the beginning of this section.

\begin{defn}
A \emph{continuum random tree} (CRT) is a random variable taking values in the set of continuum trees.
\end{defn}

(In \cite{AldousCRT3}, Aldous makes slightly different definitions of these quantities which, in particular, use rooted trees and, hence, the \emph{pointed} Gromov-Hausdorff-Prokhorov topology). It will be important in the sequel to observe that, if we consider the BCRT to be rooted at the equivalence class of 0 in the Brownian excursion construction, then the root has the same distribution as a uniform pick from $\m$ on $\mathbb{T}$.

Given a CRT $(T,d,\mu)$, let $V_1, V_2, \ldots$ be i.i.d.\ samples
from the measure $\mu$.  For $m \ge 2$, define the \emph{reduced tree}
$\mathcal{R}(m)$ to be the subtree of $T$ spanned by $V_1, V_2, \ldots, V_m$.
For every $m \ge 2$, $\mathcal{R}(m)$ is a discrete tree with edge-lengths and
labelled leaves, and so its distribution is specified by its \emph{tree-shape},
$\mathbf{t}$, an unrooted tree with $m$ labelled leaves, and its \emph{edge-lengths}. 
The reduced trees are clearly \emph{consistent}, in that $\mathcal{R}(m)$ is a subtree of $\mathcal{R}(m+1)$.

\begin{thm}[Aldous~\cite{AldousCRT3}] \label{th:rfdds}
The distribution of a CRT $(T,d,\mu)$ is specified entirely by its \emph{random finite dimensional distributions}, that is, the distribution of $\mathcal{R}(m)$ for all $m \ge 2$.
\end{thm}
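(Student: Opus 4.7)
The plan is to show that, almost surely, the measured metric space $(T,d,\mu)$ can be reconstructed from the i.i.d.\ $\mu$-sample $(V_i)_{i \ge 1}$ used to define the reduced trees $\mathcal{R}(m)$. Since $(\mathcal{R}(m))_{m\ge 2}$ consistently encodes the infinite matrix $(d(V_i,V_j))_{i,j\ge 1}$, this reconstruction would imply that any two CRTs with the same joint law of $(\mathcal{R}(m))_{m\ge 2}$ have the same distribution.

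The key step is a density lemma: almost surely, $\{V_i:i\ge 1\}$ is dense in $T$. This is equivalent to showing that $\mu$ has full topological support, i.e.\ $\mu(B(x,r))>0$ for every non-empty open ball $B(x,r)\subset T$. Both clauses of the continuum tree definition come into play. On the one hand, since $\mu$ is non-atomic and $\mu(\mathcal{L}(T))=1$, the leaf set is uncountable and $T$ must branch richly: using compactness of $T$ together with path-connectedness, one can locate inside any open ball $B(x,r)$ a branch point $v$ with a whole connected component of $T\setminus\{v\}$ contained in $B(x,r)$. On the other hand, the branching condition $\mu(T_i)>0$ then forces this component to have positive $\mu$-measure, whence $\mu(B(x,r))>0$. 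Applying this to a countable basis of open balls and invoking Borel-Cantelli gives the density of $(V_i)$ almost surely.

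Once density is secured, the reconstruction is essentially routine. The isometry class of $(T,d)$ is determined as the metric completion of $\{V_i:i\ge 1\}$, whose pairwise distances are read off directly from $(\mathcal{R}(m))_{m\ge 2}$. By Varadarajan's theorem (or a direct Glivenko-Cantelli argument using density), the empirical measures $m^{-1}\sum_{i=1}^m \delta_{V_i}$ converge almost surely weakly to $\mu$, so the measure is recovered too. Both data are measurable functions of the sequence $(\mathcal{R}(m))$, so the GHP-equivalence class of $(T,d,\mu)$ is determined in distribution by the RFDDs.

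The main obstacle is the density lemma. The branching hypothesis in the definition of continuum tree is local and rather weak, and the set of branch points of $T$ need not a priori be dense; producing a component of $T\setminus\{v\}$ sitting inside an arbitrary open ball therefore takes some care, and ultimately relies on the tension between the non-atomicity of $\mu$ and its concentration on $\mathcal{L}(T)$, which rules out thin, arc-like open sets escaping the support of $\mu$.
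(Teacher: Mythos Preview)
This theorem is not proved in the paper; it is quoted from Aldous~\cite{AldousCRT3} without argument, so there is nothing here to compare yours against directly.

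Your strategy is the right one and is essentially Gromov's reconstruction principle: the reduced trees $(\mathcal{R}(m))_{m\ge2}$ encode the full distance matrix $(d(V_i,V_j))_{i,j\ge1}$ of an i.i.d.\ $\mu$-sample, from which one rebuilds $(T,d,\mu)$ as the metric completion of the sample together with the weak limit of the empirical measures. Once density of $\{V_i\}$ in $T$ is known, this is routine.

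The gap is precisely where you flag it, but it is more serious than your sketch allows. With the axioms for a continuum tree \emph{as written in this paper}, $\mu$ need not have full support. Take two continuum trees $(T_i,d_i,\mu_i)$, $i=1,2$, glue them together at leaves via a segment of length $2r>0$, and set $\mu=\tfrac12\mu_1+\tfrac12\mu_2$. All the axioms hold: $\mu$ is non-atomic and carried by leaves; for $v$ on the open segment, $\deg(v)=2$ and each component of $T\setminus\{v\}$ contains one $T_i$ and so has mass $\tfrac12$; for $v$ inside $T_i$, the components are those of $T_i\setminus\{v\}$ with one of them enlarged by the segment and $T_j$, and the branching axiom for $T_i$ gives positivity. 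Yet the open segment has $\mu$-measure zero, so your density lemma fails and an i.i.d.\ sample is dense only in $\mathrm{supp}(\mu)$. Consequently the RFDDs recover only the Gromov--Prokhorov class $(\mathrm{supp}(\mu),d,\mu)$, not the full GHP class. To get the statement as written one must either read it in the GP sense, or impose an additional hypothesis such as leaf-density (equivalently $\mathrm{supp}(\mu)=T$), under which your argument does go through; Aldous's original framework in \cite{AldousCRT3} effectively builds this in.
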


The reduced trees of the BCRT are binary almost surely.  This entails that $\mathcal{R}(m)$ has $2m-2$ vertices and $2m-3$ edges. Let $\mathbf{t}$ be its tree-shape and $x_1, x_2, \ldots, x_{2m-3}$ be its edge-lengths listed in any (arbitrary, but fixed) order.  Then $\mathcal{R}(m)$ has density
\begin{equation}\label{eq:findimcrt}
f(\mathbf{t}; x_1, x_2, \ldots, x_{2m-3}) = \left( \sum_{i=1}^{2m-3} x_i \right) \exp \left( - \frac{1}{2} \left( \sum_{i=1}^{2m-3} x_i \right)^2 \right).
\end{equation}
Note that this implies that the tree-shape is, in fact, uniform on the set of binary tree-shapes with $m$ labelled leaves, and that the edge-lengths have an exchangeable distribution.
  We observe, for future reference, that the distance between two uniformly-chosen points of the BCRT has the Rayleigh distribution, with density $xe^{-x^2/2}$ and expectation $\sqrt{\pi/2}$.

(Note that  in \cite{AldousCRT3}, Aldous restricts his discussion to binary trees, but the theory is easily extended; see Haas and Miermont~\cite{HaasMiermont1}.)

\subsection{The BCRT as a fixed point}\label{sub:BCRTFix}
\noindent The principal contribution of this paper is a characterization of the BCRT as the unique fixed point of a certain operation on CRT's.  We need a couple of notational ingredients.   We first recall the definition of the Dirichlet distribution.

\begin{defn}
Let $\alpha_1, \alpha_2, \ldots, \alpha_n > 0$. A random variable taking values in the space $\{\mathbf{s} = (s_1, s_2, \ldots, s_n): s_i \ge 0, 1 \le i \le n, \sum_{i=1}^n s_i = 1\}$ has the \emph{Dirichlet distribution} with parameters $(\alpha_1, \alpha_2, \ldots, \alpha_n)$
(written $\mathrm{Dir}(\alpha_1, \alpha_2, \ldots, \alpha_n)$) if it has density
\[
\frac{\Gamma(\sum_{i=1}^n \alpha_i)}{\prod_{i=1}^n \Gamma(\alpha_i)}
x_1^{\alpha_1-1} x_2^{\alpha_2 -1} \ldots x_{n-1}^{\alpha_{n-1}-1} (1 - x_1 -
x_2 - \cdots - x_{n-1})^{\alpha_n-1}
\]
with respect to $(n-1)$-dimensional Lebesgue measure.
\end{defn}

Let $\mathcal{M}$ be the set of probability distributions on (GHP-equivalence classes of) measured $\R$-trees.  We define $\fix: \mathcal{M} \to \mathcal{M}$ as follows: for $M \in \mathcal{M}$,
\begin{itemize}
\item  Sample independent trees $(T_1, d_1, \mu_1)$, $(T_2, d_2, \mu_2)$, $(T_3, d_3, \mu_3)$ having distribution $M$;
\item For $1 \le i \le 3$, pick a vertex $X_i \in T_i$ according to the measure $\mu_i$;
\item Sample $\Delta = (\Delta_1, \Delta_2, \Delta_3) \sim \mathrm{Dir}(1/2,1/2,1/2)$ independently; 
\item Rescale the trees to obtain $(T_1, \Delta_1^{1/2} d_1, \Delta_1 \mu_1)$, $(T_2, \Delta_2^{1/2} d_2, \Delta_2 \mu_2)$, $(T_3, \Delta_3^{1/2} d_3, \Delta_3 \mu_3)$;
\item Identify the vertices $X_1$, $X_2$ and $X_3$ in the rescaled trees to obtain a single larger tree $(T^{\circ}, d)$ with a marked branch-point; the three measures $\Delta_1 \mu_1$, $\Delta_2 \mu_2$ and $\Delta_3 \mu_3$ naturally give rise to a (probability) measure $\mu$ on $T^{\circ}$;
\item Forget the marked branch-point in order to obtain $(T, d, \mu)$; $\fix(M)$ is the distribution of $(T, d, \mu)$.
\end{itemize}

\begin{figure}[t]
\begin{center}
\includegraphics[scale=0.8,page=2]{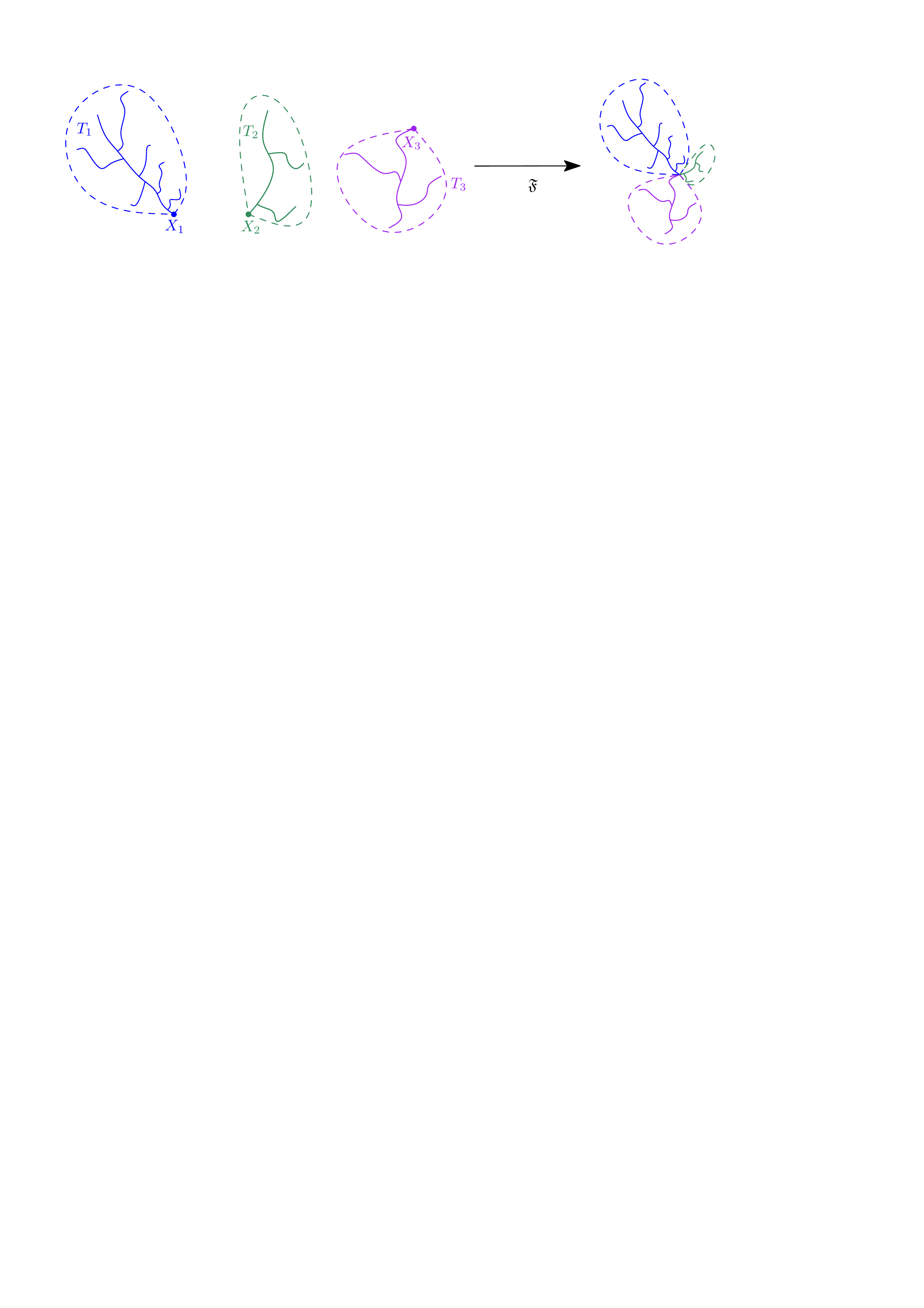}
\end{center}
\caption{The operator $\fix$}\label{fig:dectree}
\end{figure}

The operation on trees given by the function $\fix$ was first described by Aldous~\cite{AldousRecursiveSelfSimilarity}.  Let $\mathbb{M}$ be the law of the BCRT.  Theorem 2 of \cite{AldousRecursiveSelfSimilarity} implies, when rephrased in our terms, that $\mathbb{M}$ is a solution of $M = \fix(M)$. Actually, what is shown in~\cite{AldousRecursiveSelfSimilarity} is the following statement of the ``reverse'' of this construction: take a BCRT $(\mathbb{T}, \d, \m)$ and pick three points independently according to $\m$; the paths between pairs of these points intersect in a unique branch-point.  

Splitting at this branch-point then gives three BCRT's, which have been randomly rescaled by $(\Delta_1, \Delta_2, \Delta_3)$ and depend on one another only through this rescaling. Moreover the former branch-point yields a point chosen independently from the mass measure of each of the three subtrees. (An expanded proof of Aldous' Theorem 2 may be found in \cite{ABBG2}.) We will comment on this reversed perspective at the end of the paper.

Let $M$ be a solution to the fixed point equation.  Write $(\Omega, \mathcal{F}, \mathbb{P})$ for the probability space on which all the forthcoming random objects are defined.  In particular, under $\mathbb{P}$, let $(T,d,\mu)$ be a continuum random tree sampled from the distribution $M$.  

The first main result of this article is the following theorem, which is proved in the next section.
\begin{thm}\label{th:fxpt}
Suppose that $M$ is a law on continuum trees which is a fixed point of $\fix$.  Then there exists $\alpha>0$ such that if $(T,d,\mu)$ is sampled according to $M$ then $(T,\alpha d,\mu)$ has the law of the BCRT.
\end{thm}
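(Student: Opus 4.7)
The plan is to invoke Theorem~\ref{th:rfdds}: since the law of a CRT is uniquely determined by the laws of its reduced trees $\mathcal{R}(m)$, it suffices to show that, after a suitable global rescaling of the metric, $\mathcal{R}(m)$ has the density \eqref{eq:findimcrt} for every $m \ge 2$. The fixed-point equation $M = \fix(M)$ translates into a recursive description of $\mathcal{R}(m)$: conditionally on the Dirichlet weights $\Delta$, each of $m$ i.i.d.\ $\mu$-samples falls in subtree $T_i$ with probability $\Delta_i$ independently, yielding a multinomial partition $(m_1,m_2,m_3)$ of the labels. In each $T_i$ with $m_i > 0$, one forms the reduced tree spanned by the $m_i$ samples together with the branch-point (unless $m_i=m$, in which case no branch-point is needed), scales it by $\Delta_i^{1/2}$, and glues the pieces at the branch-point.

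First I would treat $m=2$. Letting $D$ denote the distance between two i.i.d.\ $\mu$-samples and $m_k = \bE[D^k]$, taking $k$-th moments of the distributional recursion and using the explicit Dirichlet moments $\bE[\Delta_1^r] = 1/(2r+1)$ yields an identity
\[
m_k \;=\; c_k\, m_k \;+\; R_k(m_0,\ldots,m_{k-1}), \qquad c_k \;=\; \frac{3(k+7)}{(k+3)(k+5)},
\]
with $R_k$ depending only on lower moments. One checks $c_1 = 1$, reflecting the obvious scale-invariance of $\fix$, while $c_k < 1$ for every $k \ge 2$. Hence $m_1$ is a free parameter and every higher moment is uniquely determined by the recursion. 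Choose $\alpha := \sqrt{\pi/2}\,/\,m_1$; since the Rayleigh distribution satisfies exactly the same recursion (by Aldous' Theorem~2, as the BCRT is itself a fixed point), $\alpha D$ matches the Rayleigh moments and, by moment-determinacy, is Rayleigh. This fixes the rescaling factor appearing in the statement.

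For $m \ge 3$ I would induct on $m$, applying the same moment-based reasoning to the joint law of $\mathcal{R}(m)$. In the recursion, only the events ``all $m$ labels in one subtree'' and ``$m-1$ labels in one subtree'' give rise to a sub-reduced-tree of size $m$; their combined contribution produces a coefficient $3/(m+1) < 1$ in front of the law of $\mathcal{R}(m)$ itself, while all other cases involve strictly smaller reduced trees, which are determined by the induction hypothesis. Since the BCRT reduced trees \eqref{eq:findimcrt} satisfy this same recursion, the rescaled fixed point's $\mathcal{R}(m)$ must agree with the BCRT's, and Theorem~\ref{th:rfdds} concludes.

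The main obstacle I expect is establishing $m_1 = \bE[D] \in (0,\infty)$, the anchor of the entire moment recursion. The fixed-point equation is only a distributional equality and does not obviously yield integrability. Positivity of $m_1$ follows from non-atomicity of $\mu$ together with the fact that the mass measure is supported on the leaves, but finiteness requires ruling out a heavy tail of $D$. This should be possible via a tail estimate exploiting the strict contraction $\bE[\Delta_i^{1/2}] = 1/2 < 1$ in the distributional recursion for $D$, which forces every moment of $D$ to be finite and thereby unlocks the rest of the argument.
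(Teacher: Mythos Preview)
Your high-level strategy matches the paper's: characterize the law via the reduced trees $\mathcal R(m)$, pin down $\mathcal R(2)$ (the two-point distance) first, then handle $m\ge 3$. The execution, however, diverges in both steps, and each has a gap.

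For $m=2$, the moment recursion $m_k=c_k m_k+R_k$ is correct and $c_k<1$ for $k\ge2$, but the argument is circular until you know $m_k<\infty$: if $m_k=\infty$ the identity reads $\infty=\infty$. You correctly locate the crux at $m_1<\infty$, but the proposed fix does not work. The quantity $\E{\Delta_i^{1/2}}=1/2$ is not a contraction for $D$, because there are three pieces and $\sum_i\E{\sqrt{\Delta_i}}=3/2>1$; the relevant sum is $\sum_i\E{W_i^s}=e^{\nu(s)}$, which equals $1$ at $s=1$ (this is exactly your observation $c_1=1$). So you are in the \emph{critical} case of the smoothing transform, where finiteness of the first moment of a fixed point is not automatic and is precisely what Durrett--Liggett's analysis establishes. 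The paper invokes their Theorems~1 and~2 directly, obtaining uniqueness up to scale in one stroke without moments; your route would have to reproduce a substantial part of that analysis before the moment recursion even becomes usable.

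For $m\ge3$, the paper does \emph{not} induct on the law of $\mathcal R(m)$. Instead it couples a tree from $M$ with a BCRT using the same Dirichlet vectors at every node, and shows (Proposition~\ref{prop:geom}) that after a finite random depth $N_m$ the $m$ uniform points are all separated by gluing points; on that event the tree-shapes coincide by construction and every edge-length is a sum of identically scaled two-point distances in the respective input trees. Since two-point distances are Rayleigh in both by the $m=2$ step, the reduced trees have the same law. Your sketch heads in a different direction and has two problems. First, the coefficient $3/(m+1)$ is wrong: for $m=3$ the probability that the points fail to separate at one step is $33/35$, not $3/4$. Second, in the ``$m-1$ in one subtree'' case the object that reappears is not simply a rescaled copy of $\mathcal R(m)$: one leaf of the inner size-$m$ reduced tree is the gluing point, and the actual $\mathcal R(m)$ is obtained by extending that leaf's edge by an independent scaled two-point distance and relabelling it. Setting up a moment recursion for the joint law (tree-shape together with $2m-3$ edge-lengths) that handles this correctly is considerably more delicate than you indicate; the coupling sidesteps it entirely by reducing everything to the already-settled $m=2$ case.
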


Before going further, we will briefly discuss the requirement that $M$ be a measure on continuum trees. Let $(T,d,\mu)$ be sampled according to $M$. The assumption that $\mu$ is is carried by the leaves of $T$ ensures that any fixed point of $\mathfrak{F}$ is binary. Indeed, if $\mu$ gives positive mass to $T \setminus \mathcal{L}(T)$ then it is clear that we can create non-binary branch-points. Given that $M$ is a fixed point of $\mathfrak{F}$ and that $\mu$ is carried by $\mathcal{L}(T)$, $\mu$ cannot, in fact, be atomic.  Indeed, suppose (for a contradiction) that there exists $x \in \mathcal{L}(T)$ such that $\mu(\{x\})>0$. Then, with positive probability, $\mathfrak{F}$ creates a tree which carries positive mass at a non-leaf, contradicting $\mu(T \setminus \mathcal{L}(T)) = 0$.

We now discuss the assumption that $\mu$ has to give a positive measure to any connected subcomponent of the tree. Recall that the BCRT $(\mathbb{T},\d,\m)$ is encoded by $(2e(t), 0 \le t \le 1)$, where $e$ is a standard Brownian excursion.  
Consider an independent Poisson point
process (PPP) on $[0,1]\times[0,\infty)$ with intensity $ds \otimes x^{-3}dx$. For each point $(s,x)$ of the PPP graft a massless
branch of length $x$ to the point of $\mathbb{T}$ corresponding to $s$ under the canonical projection $\tau$ (note that $\tau(s)$ is almost surely a leaf). As there are almost surely only finitely many of these branches having length longer than any $\epsilon > 0$, this
construction yields a compact metric space and, therefore, induces a probability distribution on the set of measured $\R$-trees. A
simple computation shows that this distribution is a solution of the fixed point
equation which is clearly not isometric to the BCRT. However, it seems reasonable to want to exclude such non-continuum tree-valued solutions.

Our second main result is as follows.

\begin{thm} \label{th:cvg}
Suppose that $M$ is a law on continuum trees such that if $(T,d,\mu) \sim M$ and, given $(T,d,\mu)$, $V_1, V_2$ are sampled independently from $\mu$, then $\E{d(V_1,V_2)}$ exists and is equal to $\sqrt{\pi/2}$.  Let $M_n = \mathfrak{F}^n M$.  Then
\[
M_n \to \mathbb{M}
\]
as $n \to \infty$, in the sense of weak convergence of measures with the Gromov--Prokhorov topology.
\end{thm}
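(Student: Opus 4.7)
My plan is to reduce Gromov--Prokhorov convergence of $M_n$ to weak convergence, for each $k \ge 2$, of the $k$-point distance matrix $\mathcal{D}_n^{(k)} := (d_n(V_i, V_j))_{1 \le i < j \le k}$ obtained by sampling $V_1, \ldots, V_k$ i.i.d.\ from the measure $\mu$ on $(T,d,\mu) \sim M_n$, and then to establish this via a recursive distributional equation and a contraction argument. The operator $\fix$ acts tractably on these matrices: one step of $\fix$ splits the $k$ samples multinomially among the three sub-trees with Dirichlet proportions $\Delta \sim \mathrm{Dir}(1/2, 1/2, 1/2)$, and $\mathcal{D}_{n+1}^{(k)}$ is a measurable function of three independent copies of $\mathcal{D}_n^{(k_i+1)}$, $i = 1, 2, 3$, where $(k_1, k_2, k_3) \sim \mathrm{Multinomial}(k;\Delta)$ (the $+1$ accounts for the branch-point $X_i$, itself a $\mu_i$-sample). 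For $k=2$ this specialises to
$$
D_{n+1} \equidist \Delta_{I_1}^{1/2} D^{(I_1)} + \Delta_{I_2}^{1/2} D^{(I_2)} \mathbf{1}_{\{I_1 \neq I_2\}},
$$
with $(I_1, I_2)$ conditionally i.i.d.\ $\Delta$-categorical on $\{1,2,3\}$ and $D^{(1)}, D^{(2)}, D^{(3)}$ i.i.d.\ copies of $D_n$, independent of the rest. A direct Dirichlet-moment computation confirms $\bE[D_{n+1}] = \bE[D_n]$, so the mean stays equal to $\sqrt{\pi/2}$ under iteration, consistently with the hypothesis and with the BCRT.

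To prove $D_n$ converges in distribution to the Rayleigh law (the BCRT pairwise distance), I would apply a Zolotarev-type contraction. For the metric $\zeta_s$ with $s > 1$, the standard argument for recursive distributional equations gives the contraction factor $6\,\bE[\Delta_1^{1+s/2}] - 3\,\bE[\Delta_1^{2+s/2}]$, which equals $1$ at $s = 1$ (reflecting the preservation of the mean) and is strictly less than $1$ for every $s > 1$ (for instance $27/35$ at $s = 2$). This yields geometric decay of $\zeta_s(D_n, D_\infty)$ and hence weak convergence. An induction on $k$, using the recursive structure described above, should then propagate convergence to every finite-dimensional distance matrix, yielding $M_n \to \mathbb{M}$ in Gromov--Prokhorov.

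The main obstacle is the moment requirement: $\zeta_s$ for $s > 1$ is well defined only when $\bE[D_0^s] < \infty$, whereas the theorem assumes only $\bE[D_0] < \infty$. I would circumvent this by combining the contraction (which identifies the limit whenever higher moments are available) with a soft tightness-plus-uniqueness route: $(M_n)$ is GP-tight because Markov's inequality applied to $\bE[D_n] = \sqrt{\pi/2}$ controls every pairwise-distance marginal, and by exchangeability every $\mathcal{D}_n^{(k)}$ is tight; the operator $\fix$ is continuous on the GP space (sampling the $k$-point distance matrix from $\fix(M)$ is a continuous functional of the input distance matrices via the explicit recursion above); and Theorem~\ref{th:fxpt} forces any $\fix$-invariant law on continuum trees to be the BCRT up to a scaling that is pinned to $1$ by the mean hypothesis. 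The delicate remaining step is to show that subsequential GP limits of $(M_n)$ are genuinely $\fix$-invariant and remain supported on continuum trees, since a priori neither property is automatically preserved under weak limits; this is where the argument will need the most care.
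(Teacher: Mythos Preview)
Your reduction of Gromov--Prokhorov convergence to convergence of the $k$-point distance matrices is correct and is exactly what the paper does (via reduced trees). The divergence is in how to establish that convergence.

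For $k=2$, the moment obstruction you flag for the Zolotarev metric is real but need not be confronted: the recursion you wrote is precisely the smoothing transform of Durrett and Liggett~\cite{DurrettLiggett}, and their Theorem~2(b) gives $\mathfrak{F}_{\mathrm{sm}}^n \hat{\mu}_0 \to \hat{\m}$ under only the first-moment matching $\int x\,\hat{\mu}_0(\mathrm d x) = \sqrt{\pi/2}$, together with the conditions $\nu(1)=0$ and $\nu'(1)<0$ already verified in Proposition~\ref{prop:distance}. No higher moments and no contraction metric are needed.

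Your fallback tightness-plus-uniqueness route has a harder gap than you acknowledge. Even granting GP-tightness and continuity of $\fix$, a subsequential limit $M_\infty$ along $(n_j)$ need not be a fixed point: continuity only tells you that $\fix(M_\infty)$ is the limit of $(M_{n_j+1})$, i.e.\ \emph{another} accumulation point of $(M_n)$, not that it equals $M_\infty$. And verifying that a GP-limit is supported on continuum trees is not a formality---the paper itself exhibits non-continuum-tree fixed points of $\fix$, so Theorem~\ref{th:fxpt} cannot be invoked without it.

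There is also a structural problem with your induction on $k$: when all $k$ samples land in one subtree, your recursion calls $\mathcal{D}_n^{(k+1)}$ (the $k$ points plus the glue point), so it does not close at level $k$. The paper avoids both this and the soft-limit issues simultaneously via the finite-depth coupling of Section~\ref{sub:coupling}. For each $m$ there is an a.s.\ finite depth $N_m$ (Proposition~\ref{prop:geom}) by which the $m$ uniform points have all separated; on $\{N_m \le k\}$ the shape of $\mathcal{S}_n(m)$ is determined solely by the Dirichlet scaling factors (hence identical to that of the coupled BCRT reduced tree), and every edge-length is a finite sum of scaled \emph{pairwise} distances in the level-$k$ input trees, which for $T_n$ have law $\mathfrak{F}_{\mathrm{sm}}^{n-k}\hat{\mu}_0$. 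Durrett--Liggett then yields convergence of these edge-lengths directly. Thus the whole argument collapses to the $k=2$ case, with no induction on $k$ and no need to identify subsequential limits as continuum trees.
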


Note that if $\E{d(V_1,V_2)} = \alpha^{-1} \sqrt{\pi/2}$ for some $\alpha \neq 1$ then the same result holds on multiplying the metric $d$ by $\alpha$. We emphasize that  there is no need for $M$ to be a law on \emph{binary} continuum trees. For example, $M$ could be the law of a stable tree of parameter in $(1,2)$ (which has only infinitary branch-points, almost surely). Theorem~\ref{th:cvg} is proved in Section 3.

\section{Uniqueness of the fixed point: proof of Theorem~\ref{th:fxpt}}

\noindent We will prove Theorem~\ref{th:fxpt} via random finite-dimensional distributions and Theorem~\ref{th:rfdds}. We start by thinking about the distance between two uniformly-chosen points. Throughout this section, we suppose that $M$ is a measure on continuum trees which is a fixed point of $\fix$. We write $\mathcal{S}(m), m \ge 2$ for the reduced trees of a tree $(T,d,\mu)$ sampled according to $M$.

\subsection{Two-point distances}
Suppose that $(T,d,\mu)$ is sampled from $M$ and let $D$ be the distance between two points of $(T,d)$ sampled independently according to $\mu$. 

\begin{prop} \label{prop:distance}
There exists a constant $\alpha>0$ such that $\alpha D$ has the Rayleigh distribution.
\end{prop}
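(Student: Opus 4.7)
The plan is to derive a distributional equation for $D$ from the fixed-point property of $M$, use it to pin down the moments of $D$ up to an overall scale, and identify $D$ with a scaled Rayleigh by moment determinacy.

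Realizing $(T, d, \mu) \sim M$ via the $\fix$-construction, two independent $\mu$-picks $V_1, V_2$ fall into subtrees $I, J$ with $\Prob{I = i, J = j \mid \Delta} = \Delta_i \Delta_j$. Because, conditionally on being in subtree $i$, each pick is independent of the marked vertex $X_i$ and distributed as $\mu_i$, one has $d_i(X_i, V) \eqd D$ in that subtree. On the event $\{I = J = i\}$ the two picks lie in a single scaled subtree, giving $d(V_1, V_2) = \Delta_i^{1/2} D^{(i)'}$; on $\{I = i \ne j = J\}$ the path between $V_1$ and $V_2$ crosses the branch-point, giving $d(V_1, V_2) = \Delta_i^{1/2} D^{(i)} + \Delta_j^{1/2} D^{(j)}$. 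All $D^{(\cdot)}, D^{(\cdot)'}$ are independent copies of $D$, independent of $(\Delta, I, J)$.

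Assuming for now that $m_k = \E{D^k}$ is finite, raising this identity to the $k$-th power, taking expectations, and expanding by the binomial theorem separates the terms containing $m_k$ itself from those in strictly lower moments, yielding a recursion
\[
(1 - c_k)\, m_k \;=\; \sum_{l=1}^{k-1} \binom{k}{l}\, d_{l, k-l}\, m_l\, m_{k-l},
\]
with coefficients that one computes explicitly from the Dirichlet law. Using $\E{\Delta_1^\alpha} = 1/(2\alpha + 1)$ and its two-coordinate analogue, a short calculation gives $c_k = 3(k + 7)/((k + 3)(k + 5))$; thus $c_1 = 1$, so the $m_1$-identity is vacuous --- consistent with the scale-invariance $D \mapsto \alpha D$ of the equation --- while $c_k < 1$ for every $k \ge 2$, so prescribing $m_1$ determines $m_k$ for all $k$. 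Since the BCRT is itself a $\fix$-fixed point whose two-point distance is Rayleigh, the Rayleigh moment sequence automatically satisfies the same recursion; hence, setting $\alpha = \sqrt{\pi/2}/m_1$, the moments of $\alpha D$ coincide with those of a standard Rayleigh $R$. The Rayleigh moments $2^{k/2}\,\Gamma(1 + k/2)$ satisfy Carleman's condition, so $R$ is determined by its moments, and $\alpha D \eqd R$.

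The main obstacle is the \emph{a priori} finiteness of the moments. Because $c_1 = 1$, the first-moment identity offers no bound on $m_1$, so one cannot simply bootstrap from it; on the other hand, once $m_1 < \infty$ is known, the inequality $c_k < 1$ for $k \ge 2$ lets finiteness propagate to every $m_k$ by induction on the recursion. Producing this initial estimate --- for instance via a diameter bound obtained by iterating the $\fix$-construction, or via a contraction argument for $\fix$ in a Zolotarev-type metric on laws with a prescribed first moment (whose contraction coefficient one checks is strictly less than $1$ for suitable exponents) --- is the principal technical step.
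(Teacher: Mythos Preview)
Your distributional equation and moment recursion are correct, and your coefficient $c_k = 3(k+7)/((k+3)(k+5))$ is exactly $e^{\nu(k)}$ in the paper's notation. The paper, however, bypasses moments entirely: it recognises the equation for $D$ as an instance of the \emph{smoothing transform} and applies Theorems~1 and~2 of Durrett and Liggett~\cite{DurrettLiggett}, whose hypotheses are precisely that $\nu$ has its unique zero on $[0,\infty)$ at $s=1$ with $\nu'(1)<0$. Uniqueness of the fixed point up to scale then follows immediately, without any a priori integrability of $D$.

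The gap you flag --- finiteness of $\E{D}$ --- is genuine and is not closed by either of your proposed remedies. Compactness of $(T,d)$ yields only $D<\infty$ almost surely; a recursive diameter inequality such as $\mathrm{diam}(T)\le\sum_i\sqrt{\Delta_i}\,\mathrm{diam}(T_i)$ has coefficient $3\,\E{\sqrt{\Delta_1}}=3/2>1$ in expectation, and in any case an inequality of the form $\E{X}\le c\,\E{X}$ cannot bootstrap finiteness from itself. A Zolotarev-$\zeta_s$ contraction for the law of $D$ requires $\sum_k\E{W_k^s}<1$, i.e.\ $\nu(s)<0$, which forces $s>1$; but $\zeta_s$ with $s>1$ is only defined on laws with a common finite first moment, so the argument is circular. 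Ruling out heavy-tailed solutions of a smoothing equation in the regime $\nu(1)=0$, $\nu'(1)<0$ is in fact one of the substantive contributions of the Durrett--Liggett analysis (carried out via Laplace transforms and the behaviour of $\phi(\lambda)=\E{e^{-\lambda D}}$ near $\lambda=0$), so completing your programme would amount to reproving the theorem the paper simply cites.
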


\begin{proof}
Suppose that $(T_1, d_1, \mu_1)$, $(T_2, d_2, \mu_2)$ and $(T_3, d_3, \mu_3)$ are sampled independently from $M$.  Apply $\fix$ with $\Delta = (\Delta_1, \Delta_2, \Delta_3)$ to obtain a new tree $(T,d,\mu) \sim M$.  Suppose now that we sample two points independently according to $\mu$. Let $P_1$, $P_2$ and $P_3$ be the number of these points falling in the subtrees of $T$ corresponding to $T_1$, $T_2$ and $T_3$ respectively.  Then, conditional on $\Delta$, we have $(P_1, P_2, P_3) \sim \mathrm{Multinomial}(2; \Delta_1, \Delta_2, \Delta_3)$.  
Let $D$ be the distance between the two points. Then
\begin{equation} \label{eq:Rayleigh}
D = \sqrt{\Delta_1} D_1\I{P_1 > 0} + \sqrt{\Delta_2} D_2\I{P_2 > 0} + \sqrt{\Delta_3}D_3\I{P_3 > 0},
\end{equation}
where $D_1$, $D_2$ and $D_3$ are three independent copies of $D$, independent of everything else on the right-hand side, corresponding to the distances between two uniformly-chosen points in each of the three  subtrees.  Let $W_k = \sqrt{\Delta_k}\I{P_k > 0}$, $k=1,2,3$.  Then this is precisely the setting of the smoothing transform studied by Durrett and Liggett~\cite{DurrettLiggett}.  In that paper, it is shown that the nature of the family of solutions to such distributional fixed point equations depends on the analytic properties of a certain function depending on the moments of $W_1, W_2, W_3$: for $s \ge 0$, let
\[
\nu(s) = \log \left( \sum_{k=1}^3 \E{W_k^{s} \I{W_k > 0}} \right).
\]
By symmetry, $\nu(s) = \log \left(3 \E{W_1^{s} \I{W_1 > 0}} \right)$.
Now, $\Prob{P_1 > 0 | \Delta_1} = \Delta_1^2 + 2 \Delta_1(1 - \Delta_1) = 2 \Delta_1 - \Delta_1^2$.  Since $\Delta_1 \sim \mathrm{Beta}(1/2,1)$, we obtain
\[
\E{W_1^{s} \I{W_1 > 0}} = 2 \E{\Delta_1^{\frac{s}{2}+1}} - \E{\Delta_1^{\frac{s}{2}+2}} = \frac{2}{s + 3} - \frac{1}{s + 5}.
\]
Hence,
\[
\nu(s) = \log \left( \frac{3(s + 7)}{(s + 3)(s + 5)} \right),
\]
which is finite for all $s \ge 0$ and has its unique zero in $s \ge 0$ at $s = 1$.  Moreover, $\nu'(1) = -7/24 < 0$.  Theorems 1 and 2 of \cite{DurrettLiggett} then entail that the equation (\ref{eq:Rayleigh}) has a unique fixed point, up to a constant scaling factor.  Finally, the distance between two uniformly chosen points in a BCRT has the Rayleigh distribution and that must be a solution to (\ref{eq:Rayleigh}). Define $\alpha$ by the relation $\E{D}  = \alpha^{-1} \sqrt{\pi/2}$. Since the Rayleigh distribution has mean $\sqrt{\pi/2}$, this concludes the proof.
\end{proof}

For future reference, we write $\mathfrak{F}_{\mathrm{sm}}$ for the operator which takes the law of a non-negative real-valued random variable $D'$ and associates to it the law of 
\[
 W_1 D'_1 + W_2 D'_2 + W_3 D'_3
\]
where $D_1'$, $D_2'$ and $D_3'$ are three independent copies of $D'$, independent of everything else on the right-hand side, and where $W_1, W_2, W_3$ are exactly as above.

\subsection{A coupling} \label{sub:coupling}

\noindent Having determined the distribution of $\mathcal{S}(2)$ (which, of course, has trivial tree-shape), we now want to determine the distribution of the reduced trees $\mathcal{S}(m), m \ge 3$. In order to do so, we proceed by coupling a tree $T$ distributed according to $M$ and a realisation $\tilde T$ of the BCRT, using the operator $\fix$.  We will, in fact, find it convenient to set up this coupling more generally.  Indeed, fix $n \ge 0$ and let $M'$ be a general law on continuum trees (which is not necessarily a fixed point of $\mathfrak{F}$). Now let $T \sim \mathfrak{F}^{n+1} M'$; we will produce a coupling of $T$ and $\tilde{T}$.  

Before we can describe this coupling, we need to establish some notation.
For $n \ge 0$, let $\Sigma = \cup_{i=0}^{\infty}\{1,2,3\}^i$ be the set of words on the alphabet $\{1,2,3\}$ where, by convention, $\{1,2,3\}^0 = \{\emptyset\}$ is the set containing the empty word. Let $\Sigma_n=\cup_{i=0}^n\{1,2,3\}^i$ be the set of words with at most $n$ letters.  For $\mathbf{i} \in \Sigma$, write $|\mathbf{i}|$ for the length of the word $\mathbf{i}$.  For $1 \le m \le |\mathbf{i}|$, write $\mathbf{i}_m$ for the $m$th letter of $\mathbf{i}$ and $\mathbf{i}_{[m]} = \mathbf{i}_1 \ldots \mathbf{i}_m$ for the prefix consisting of the first $m$ letters of $\mathbf{i}$.  

Fix $n \ge 0$ and start from a family $(T_{\mathbf{i}})_{\mathbf{i} \in \{1,2,3\}^{n+1}}$ of $3^{n+1}$ independent continuum random trees with common law $M'$, and a family $(\tilde T_{\mathbf{i}})_{\mathbf{i} \in \{1,2,3\}^{n+1}}$ of $3^{n+1}$ independent copies of the BCRT.  We will refer to these as the \emph{input trees} and will use them and successive applications of $\fix$ in order to build the trees $T = T_{\emptyset}$ and $\tilde{T} = \tilde{T}_{\emptyset}$.  At each application of $\fix$, we will use the same scaling factors and glue together subtrees with the same labels. More precisely, let $(\Del i)_{\mathbf{i}\in \Sigma}$ and $(U^{(\mathbf{i})})_{\mathbf{i}\in \Sigma}$ be independent families of independent random variables where, for each $\mathbf{i}\in \Sigma$, $\Del i = (\Del i _1, \Del i _2, \Del i _3) \sim \mathrm{Dir}(1/2,1/2,1/2)$ and $U^{(\mathbf{i})} = (U^{(\mathbf{i})}_1,U^{(\mathbf{i})}_2,U^{(\mathbf{i})}_3)$, where $U^{(\mathbf{i})}_1$, $U^{(\mathbf{i})}_2$ and $U^{(\mathbf{i})}_3$ are independent uniform random variables on $[0,1]$. 

\begin{figure}[t]
\begin{center}
\includegraphics[scale=0.95]{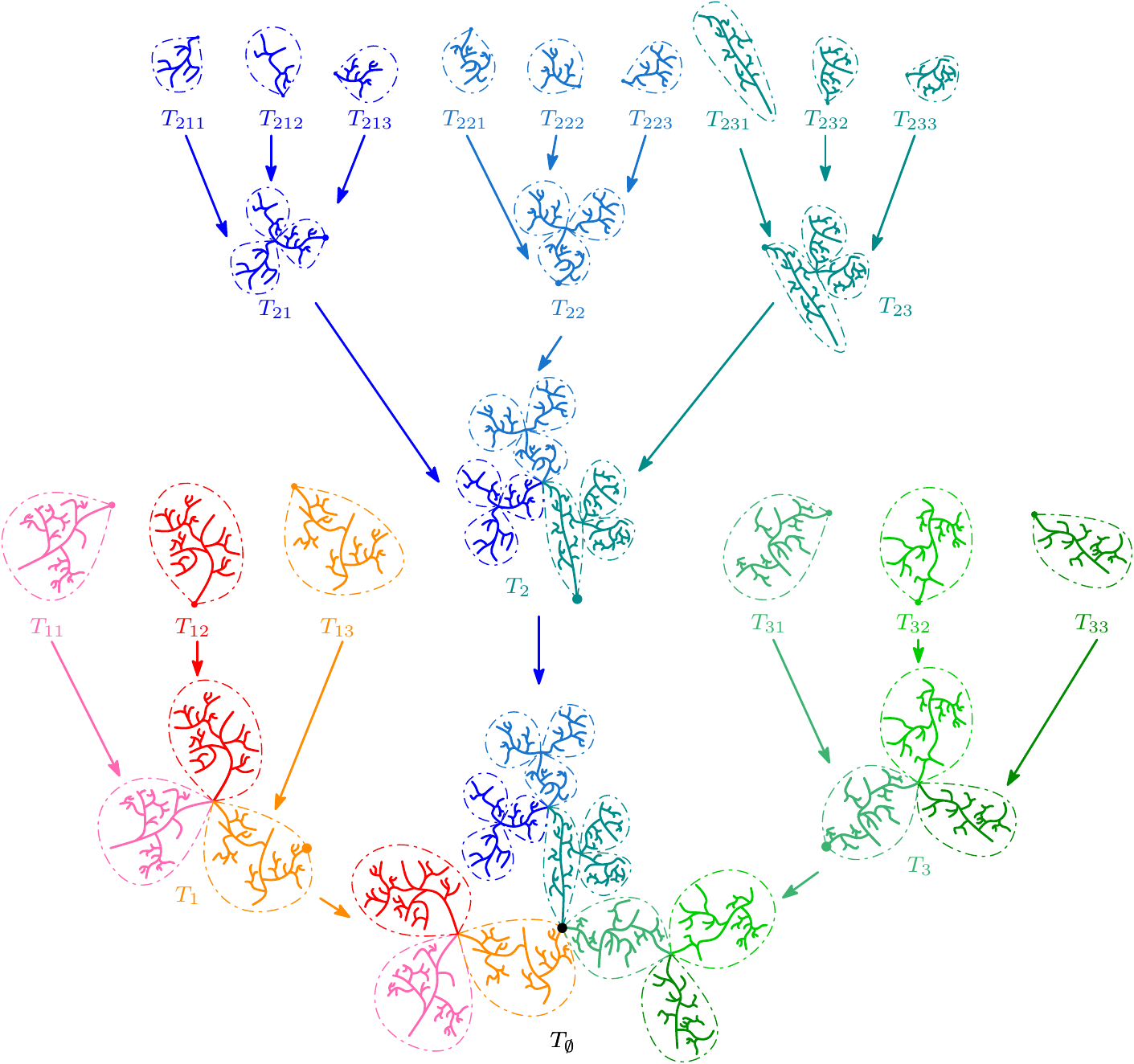}
\end{center}
\caption{Example of the construction of $T_{\emptyset}$ from some input trees (the rescaling is omitted here). For instance, here, $L^{(2,1,3)}=3$, $L^{(2,2,3)}=1$, $L^{(2,3,3)}=1$ and $L^{(\emptyset,2,3)}=31$. \label{fig:labelsTrees}}
\end{figure}

The families $(T_{\mathbf{i}})_{\mathbf{i} \in \Sigma_{n}}$ and $(\tilde T_{\mathbf{i}})_{\mathbf{i} \in \Sigma_{n}}$ are constructed recursively as follows. The tree $T_{\mathbf i}$ (resp. $\tilde T_{\mathbf i}$) is constructed by applying $\fix$ to $T_{\mathbf i 1}$, $T_{\mathbf i 2}$ and $T_{\mathbf i 3}$ (resp. $\tilde T_{\mathbf i 1}$, $\tilde T_{\mathbf i 2}$ and $\tilde T_{\mathbf i 3}$) with scaling factors $\Del i _1$, $\Del i _2$ and $\Del i _3$, where we emphasize that the \emph{same} scaling factors are used to construct both families. In each of the trees $T_{\mathbf{i}1}$, $T_{\mathbf{i}2}$ and $T_{\mathbf{i}3}$ (resp. $\tilde T_{\mathbf{i}1}$, $\tilde T_{\mathbf{i}2}$ and $\tilde T_{\mathbf{i}3}$), we need to pick a uniform point which tells us where to glue them together (once rescaled) to form $T_{\mathbf{i}}$ (resp. $\tilde T_{\mathbf{i}}$).  But if $|\mathbf{i}| < n$, we will also want to keep track of where these uniform points sit in the trees at level $n+1$.  We can split this problem into two parts: first finding the label of the subtree at level $n$ in which a particular uniform point lies, and then finding where precisely within that subtree it sits.  We will use the random variables $(U^{(\mathbf i)})_{\mathbf i \in \Sigma_{n}}$ to determine the label of the subtree, and the exact location of the point is then a uniform pick from that subtree.  We will use the same labels in $T$ and $\tilde T$ but independent picks from the respective subtrees chosen.

Let $\Delta_k=\Del \emptyset _k$, for $1\leq k\leq 3$. For $\mathbf{i} \in \Sigma_n \setminus \{\emptyset\}$ and $1 \le k \le 3$, recursively define $\Delta_{\mathbf i k} := \Delta_{\mathbf i}\Del i _k$. 
In addition, for $\mathbf{j} \in \Sigma$, write $\Delta_{\mathbf{j}}^{\!(\mathbf{i})} :=\Delta_{\mathbf{i} \mathbf{j}} / \Delta_{\mathbf{i}}$.  
For $\mathbf{i}\in \Sigma_{n}$, let $\mathbf{j}\in\Sigma_{n-|\mathbf{i}|+1}$. By construction, $T_\mathbf{i}$ has a subtree which is equal to $T_\mathbf{ij}$, up to rescaling $\mu^{(i)}_{T_\mathbf{ij}}$ and $d_{T_\mathbf{ij}}$ by $\Delta_{\mathbf j}^{(\mathbf{i})}$ and $\sqrt{\Delta_{\mathbf j}^{(\mathbf{i})}}$ respectively. When the context is clear, we ignore the rescaling and refer to this subtree as $T_\mathbf{ij}$. Then the probability that a uniform point in $T_{\mathbf{i}}$ belongs to the subtree $T_{\mathbf{ij}}$ is equal to $\Del i_{\mathbf{j}}$. 
 For $\mathbf i\in \Sigma_n$ and $1\leq k\leq 3$, we define a word $L^{(\mathbf{i},k,n+1)}$ of length $n-|\mathbf i|$ such that $\mathbf{i}kL^{(\mathbf{i},k, n+1)}$ represents the label of the input tree at level $n+1$ in which the uniform point sampled in $T_{\mathbf{i}k}$ sits. It is convenient to use a random recursive partition of the interval $[0,1]$ to choose this point. The left boundaries of the intervals of this partition are defined by
\[
B^{(\mathbf{i}, k)}_1 = 0, \quad B^{(\mathbf{i}, k)}_2 = \Delta^{(\mathbf{i}k)}_1, \quad B^{(\mathbf{i}, k)}_3 = \Delta^{(\mathbf{i}k)}_1 + \Delta^{(\mathbf{i}k)}_2.
\]
Recursively, for $\mathbf{j} \in \Sigma_{n-|\mathbf{i}|-1}\backslash \{\emptyset\}$, let
\[
B^{(\mathbf{i}, k)}_{\mathbf{j}1} = B^{(\mathbf{i}, k)}_{\mathbf{j}}, \quad B^{(\mathbf{i}, k)}_{\mathbf{j}2} = B^{(\mathbf{i}, k)}_{\mathbf{j}} + \Delta^{(\mathbf{i}k)}_{\mathbf{j}1}, \quad B^{(\mathbf{i}, k)}_{\mathbf{j}3} = B^{(\mathbf{i}, k)}_{\mathbf{j}} + \Delta^{(\mathbf{i}k)}_{\mathbf{j}1} + \Delta^{(\mathbf{i}k)}_{\mathbf{j}2}.
\]
For $0\leq  \ell \leq  n-|\mathbf{i}|-1$, if $L_{[\ell]}^{(\mathbf{i},k, n+1)} = \mathbf{j}$ then let
\[
L_{\ell+1}^{(\mathbf{i}, k, n+1)} = \begin{cases}
1  & \text{ if $B^{(\mathbf{i}, k)}_{\mathbf{j}1} \le U^{(\mathbf{i})}_ k \le B^{(\mathbf{i}, k)}_{\mathbf{j}2}$}\\
2 & \text{ if $B^{(\mathbf{i}, k)}_{\mathbf{j}2} < U^{(\mathbf{i})}_ k \le B^{(\mathbf{i}, k)}_{\mathbf{j}3}$} \\
3 & \text{ if $B^{(\mathbf{i}, k)}_{\mathbf{j}3} < U^{(\mathbf{i})}_ k$.}
\end{cases}
\]
Observe that the definition of $L^{(\mathbf{i},k,n+1)}$ depends only on $(\Delta^{(\mathbf{i})})_{\mathbf{i} \in \Sigma_n}$ and $(U^{(\mathbf{i})})_{\mathbf{i} \in \Sigma_n}$. 

So, finally, when we sample the uniform point in $T_{\mathbf{i}k}$ (resp. in $\tilde T_{\mathbf{i}k})$ needed to create $T_{\mathbf{i}}$ (resp. $\tilde T_{\mathbf{i}})$, the value of $L^{(\mathbf i,k,n+1)}$ gives the index of the input tree in which the point sits. Then, conditionally on this choice, we sample the point uniformly from $T_{\mathbf{i}kL^{(\mathbf i,k, n+1)}}$ (resp. $\tilde T_{\mathbf{i}kL^{(\mathbf i,k, n+1)}}$).

Certain statistics of the trees constructed by this coupling depend \emph{only} on the scaling factors and not on the input trees.  These statistics are identical for the two trees.  Moreover, because the construction can be performed consistently for different values of $n$, we can make sense of an infinite version of it as a projective limit, which results in a family $(L^{(\mathbf{i},k,n+1)}, \mathbf{i} \in \Sigma_n, 1 \le k \le 3, n \ge 0)$ of labels which encode the gluing points all the way down.

\subsection{The reduced trees}
\noindent Now, consider $\mathcal{S}(3)$.  Again, in this case, the tree-shape is deterministic.  We will show that the lengths of the three branches can each be expressed as sums of rescaled distances between pairs of uniform points. Fix $n$ and consider $(T, d,\mu) \sim M$ to be constructed as in the previous section after recursive applications of $\fix$ to level $n+1$. We sample three new independent uniform points from $T$. We wish to determine whether their branch-point in $T$ has been used as a gluing point in the construction of $T$ and, if so, at which step of the construction.

 If, when we decompose $T$ into its three subtrees $T_1$, $T_2$ and $T_3$, the three new points all happen to fall into different subtrees, then their branch-point is determined and is the point $G$ used to glue $T_1$, $T_2$ and $T_3$ together. However, if at least two points fall into the same subtree, say $T_1$, we must then further decompose $T_1$ in order to try to determine the location of the branch-point. We continue this process recursively until either (a) the three points all fall into different subtrees or (b) we reach level $n+1$. Now observe that the probability that the points are separated depends only on the \emph{sizes} of the subtrees and not on the underlying structure of the trees. In particular, this means that we can use the infinite version of our coupling.  So let $N_3$ be the smallest value $k \ge 1$ such that the branch-point between our three uniform points is a gluing point at level $k$ in the infinite coupling. More generally, let $N_m$ be the smallest value $k \ge 1$ such that the branch-points between our $m$ uniform points are all determined as gluing points at levels at most $k$.

\begin{prop} \label{prop:geom}
For $m \ge 3$, $N_m < \infty$ almost surely.  In particular, $\Prob{N_3 =k} = \frac{2}{35} \left( \frac{33}{35} \right)^{k-1}$, for $k \ge 1$.
\end{prop}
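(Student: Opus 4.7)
The plan is to exhibit a memoryless-type identity for $N_3$ which, combined with an explicit computation of $\Prob{N_3 = 1}$, yields the geometric distribution; the finiteness of $N_m$ for larger $m$ will then follow by reducing to finitely many copies of the $N_3$ problem.

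First I would compute $\Prob{N_3 = 1}$. Conditional on $\Delta$, each of the three uniform samples lies in subtree $T_k$ with probability $\Delta_k$, independently. Their Steiner point equals the top gluing point $g_\emptyset$ iff the three points lie in three distinct subtrees, an event of conditional probability $6\Delta_1\Delta_2\Delta_3$. A short Dirichlet moment computation for $\Delta \sim \mathrm{Dir}(1/2,1/2,1/2)$ yields $\E{\Delta_1\Delta_2\Delta_3} = 1/105$, so $\Prob{N_3 = 1} = 2/35$.

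For the restart identity, I would decompose the complementary event $\{N_3 > 1\}$ into: (a) all three $v_i$ lie in a single subtree $T_j$; or (b) exactly two lie in some $T_j$ and the third lies elsewhere (WLOG $v_1, v_2 \in T_1$ and $v_3 \notin T_1$). In case (a) the conditioned samples are iid uniform picks from $\mu_j$, and $T_j$ has law $M$ because $M$ is a fixed point of $\fix$; so we recurse on three iid uniform samples in a tree of law $M$. In case (b) a direct check in the $\R$-tree shows that the Steiner point of $\{v_1, v_2, v_3\}$ in $T$ equals the Steiner point of $\{v_1, v_2, X_1\}$ in $T_1$, where $X_1$ is the uniform pick from $\mu_1$ used by $\fix$ to identify the three subtrees. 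Since $X_1$ is drawn independently of the $v_i$, and since $v_1, v_2$ conditional on lying in $T_1$ are iid uniform from $\mu_1$, we again recurse on three iid uniform samples in a tree of law $M$. In both cases the gluing level of the Steiner point in $T$ is exactly one more than in the subproblem, which gives $(N_3 - 1 \mid N_3 > 1) \eqd N_3$. Combining this with $\Prob{N_3 = 1} = 2/35$ yields the claimed geometric distribution.

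For $m \ge 3$, since $M = \fix(M)$ forces $T$ to be binary (as noted in the discussion after Theorem~\ref{th:fxpt}), each of the $m-2$ internal nodes of the reduced tree $\mathcal{S}(m)$ is the Steiner point of some triple chosen from $\{v_1, \ldots, v_m\}$. Applying the preceding argument to each such triple shows that its Steiner point is a.s.\ a gluing point at a finite level, so $N_m$ is bounded by the maximum of $\binom{m}{3}$ a.s.\ finite random variables and is therefore a.s.\ finite. The main delicacy is in case (b): one must verify carefully that $X_1$ genuinely enters the recursion as a third iid uniform sample from $\mu_1$ independent of $v_1, v_2$, which uses both the independence of $X_1$ from the later samples $v_i$ and the fact that the event defining case (b) depends only on $\Delta$ and the subtree labels of the $v_i$, not on the gluing points.
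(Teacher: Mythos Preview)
Your argument for $m=3$ mirrors the paper's: the same case split (all three in one subtree; two together and one apart; all separated), the same restart identity $(N_3-1\mid N_3>1)\eqd N_3$, and the same Dirichlet moment computation $6\,\E{\Delta_1\Delta_2\Delta_3}=2/35$.

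For $m\ge 4$ the two proofs genuinely diverge. The paper proceeds by induction on $m$: at the top level the $m$ points either all (or all but one) land in the same subtree, giving a fresh copy of the size-$m$ problem, or at least two points are separated from the rest, which reduces to two or three independent subproblems each of size at most $m-1$; a geometric wait until the latter event, combined with the induction hypothesis, yields $N_m<\infty$. Your route instead observes that every branch-point of $\mathcal{S}(m)$ is the Steiner point of some triple from $\{v_1,\ldots,v_m\}$, that any such fixed triple consists of three i.i.d.\ uniform picks so its separation level has the law of $N_3$, and hence $N_m$ is bounded by the maximum of $\binom{m}{3}$ almost surely finite random variables. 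This is correct and more direct, avoiding the induction entirely. Two small remarks: (i) your appeal to binariness of $T$ via ``$M=\fix(M)$'' is unnecessary --- all you use is that any branch-point of a reduced tree is the Steiner point of \emph{some} triple of leaves, which holds for arbitrary trees; dropping it makes your argument available verbatim in Section~3, where $M$ is not assumed to be a fixed point; (ii) likewise, in cases (a) and (b) the recursion works because the \emph{splitting structure} (the family $(\Delta^{(\mathbf{i})})$ and the subtree labels) is self-similar, not because $T_j$ has law $M$, so that justification is also superfluous. The paper's inductive decomposition yields slightly more structural information about $N_m$, but this extra structure is not used elsewhere.
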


\begin{proof}
We proceed by induction on $m$ and start with the case $m=3$. There are three possibilities for the way in which the three points are distributed amongst the subtrees $T_1$, $T_2$ and $T_3$:
\begin{enumerate}
\item The three points fall in different subtrees.
\item All three points fall in the same subtree.
\item Two points fall in the same subtree and the remaining point falls in a different subtree.
\end{enumerate}
In case (1), as observed above, the branch-point is necessarily $G$.  In case (2), we have a new independent copy of the original problem of finding the branch-point between three points chosen uniformly from a copy of $T$.  In case (3), the branch-point we seek is the same as that between $G$ and the two uniform points which fell in the same subtree.  But $G$ is also a uniformly chosen point in that subtree.  So again, it remains to find the branch-point between three points chosen uniformly from a copy of $T$.  Indeed, unless case (1) occurs, we recursively obtain a new (independent) copy of the original problem. (See Figure~\ref{fig:recdepth} for an illustration.) Since case (1) occurs with strictly positive probability, it follows that $N_3$ is a geometric random variable. The probability that the three points fall in different subtrees at any step is given by
\[
6 \E{\Delta_1 \Delta_2 \Delta_3} = \frac{2}{35},
\]
and so we obtain $\Prob{N^{(n)}_3 =k} = \frac{2}{35} \left( \frac{33}{35} \right)^{k-1}$, for $k \ge 1$.

For $m\geq 4$, we proceed by induction. It will be convenient to define $N_2 = 0$.  Suppose that $N_{\ell} < \infty$ almost surely for $3 \le \ell \le m-1$.  There are again three possibilities for the distribution of $m$ uniform points amongst the subtrees $T_1$, $T_2$ and $T_3$:
\begin{enumerate}
\item At least two points fall in different subtrees from the rest.
\item All $m$ points fall in the same subtree.
\item $m-1$ points fall in the same subtree and the remaining point falls in a different subtree.
\end{enumerate}
In cases (2) and (3), we obtain again a new copy of the same problem. In case (1), we get two or three independent copies of a problem of strictly smaller size.  Again, case (1) occurs with strictly positive probability at each level, and so we have a geometric number of trials, $\tilde{N}_m$ say, until it does. Then $\tilde{N}_m < \infty$ almost surely. On $\{\tilde{N}_m < \infty\}$, there are random variables $A_1, A_2, A_3$ such that $2 \le A_2 \le A_1 \le m-2$, $0 \le A_3 \le A_2$ and $A_1 + A_2 + A_3 = m$, which represent the numbers of points falling in different subtrees (in decreasing order).  Then the remaining number of levels we have to explore in order to separate all of the points has the same distribution as 
\[
\max \left\{ N_{A_1+1}, N_{A_2+1}, N_{A_3+1} \right\},
\]
where the three random variables in the maximum are conditionally independent given $A_1, A_2, A_3$. Since $A_3+1 \le A_2+1 \le A_1+1 \le m-1$, it follows straightforwardly that $N_m < \infty$ almost surely. The result then follows by induction on $m$.

\end{proof}

\begin{figure}[t]
\begin{center}
\includegraphics[scale=1,page=2]{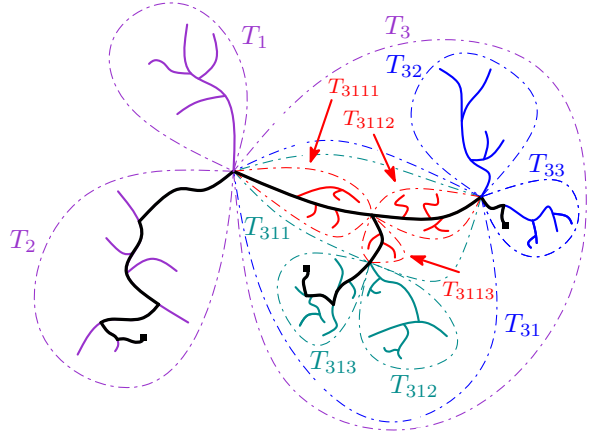}
\end{center}
\caption{Finding the branch-point between three uniform points. One point falls in $T_2$ and two fall in $T_3$, so we must further decompose $T_3$.  We have an independent copy of the original problem in $T_3$ (where one of the points considered is now $G$).  One of the points falls in $T_{33}$ and the other two in $T_{31}$, so we must further decompose $T_{31}$.  One of the points now falls in $T_{313}$ but the two others are in $T_{311}$, so we repeat in $T_{311}$. Finally, the three points fall in different subtrees of $T_{311}$ and so we obtain $N_3 = 4$.  \label{fig:recdepth}}
\end{figure}

\begin{prop} \label{prop:redtree}
$(\mathcal{S}(m), m \ge 2)$ have the same joint distribution as the reduced trees $(\mathcal{R}(m), m \ge 2)$ of the BCRT.
\end{prop}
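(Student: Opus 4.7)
The plan is to apply the coupling of Section~\ref{sub:coupling} with $M'=M$, building $T\sim M$ and a BCRT $\tilde T$ simultaneously from the same scaling families $(\Del i)_{\mathbf{i} \in \Sigma_n}$ and word-uniforms $(U^{(\mathbf{i})})_{\mathbf{i} \in \Sigma_n}$, and then to transfer the identity of Proposition~\ref{prop:distance} from single pairs to full reduced trees by a segment-by-segment decomposition. Since Proposition~\ref{prop:distance} gives $\alpha D \eqd \mathrm{Rayleigh}$ rather than $D \eqd \mathrm{Rayleigh}$, it is convenient to work throughout with the rescaled metric $d' = \alpha d$ on $T$; the equality $\mathcal{S}(m) \eqd \mathcal{R}(m)$ of the statement should be understood with respect to this rescaled metric.

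First, I extend the coupling to include the reduction points: for each $j \ge 1$ I introduce a fresh $W_j \sim \mathrm{Uniform}[0,1]$ and, using the shared $\Del i$'s exactly as in the definition of $L^{(\mathbf{i},k,n+1)}$, determine a word $\kappa(j) \in \{1,2,3\}^{n+1}$ telling which input tree at level $n+1$ contains both $V_j$ and $\tilde V_j$; the precise location within that input tree is then drawn independently for $T$ and $\tilde T$. In this way all the combinatorial data---the ternary gluing structure, the coordinates $\kappa(1),\ldots,\kappa(m)$, and the gluing-point words $L^{(\mathbf{i},k,n+1)}$---are shared between the two trees.

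Fix $m\ge 2$. By Proposition~\ref{prop:geom}, $N_m<\infty$ almost surely, so for any $\epsilon>0$ I may choose $n$ large enough that $A_n := \{N_m \le n\}$ has probability at least $1-\epsilon$. On $A_n$ every branch-point of $\mathcal{S}(m)$ (and of $\mathcal{R}(m)$) is a gluing point of the construction, so the tree-shape of each is a deterministic function of the shared combinatorial data; in particular the two tree-shapes coincide almost surely on $A_n$. Each edge of $\mathcal{S}(m)$ decomposes as a concatenation of segments, one per input tree at level $n+1$ that its geodesic traverses, each scaled by the shared factor $\sqrt{\Delta_{\mathbf{i}}}$. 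The two endpoints of each segment are independent uniform picks in the corresponding input tree (drawn from among reduction points, the parent-gluing point, and any coarser-level gluing points descending into it), and segments from distinct input trees are independent. By Proposition~\ref{prop:distance}, $\alpha$ times each such segment length in $T$ has the same law as the corresponding segment length in $\tilde T$, jointly over all segments. Hence on $A_n$, $\mathcal{S}(m)$ (computed with $d' = \alpha d$) agrees in law with $\mathcal{R}(m)$ as an edge-weighted labelled tree. Letting $n\to\infty$ removes the conditioning, and the consistency in $m$ follows from using a single sequence $V_1,V_2,\ldots$ throughout.

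The main obstacle will be the careful bookkeeping in this edge decomposition: one must verify that the distinguished points landing in a given input tree at level $n+1$---reduction points, the parent-gluing point, and coarser-level gluing points that descend into it---are jointly independent uniforms there, so that each segment is genuinely a two-point distance and Proposition~\ref{prop:distance} applies segment-by-segment. Once this is established, the proposition follows.
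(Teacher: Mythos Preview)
Your proposal is correct and follows essentially the same route as the paper: couple $T$ and $\tilde T$ via shared Dirichlet factors and subtree labels, work on the high-probability event $\{N_m \le n+1\}$ where all branch-points are gluing points so that the tree-shapes coincide, and then decompose each edge of the reduced tree into a sum of rescaled two-point distances in the level-$(n{+}1)$ input trees, invoking Proposition~\ref{prop:distance} segment by segment. The paper handles the ``main obstacle'' you flag with a brief parenthetical (the size-biasing from having/not having uniform points land in a given subtree affects only the scaling factors, which are independent of the input trees), and your explicit treatment of the $\alpha$-rescaling and of the extra uniforms $W_j$ for the reduction points is a mild refinement of what the paper leaves implicit.
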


\begin{proof}
Fix $\epsilon > 0$ and $m \ge 2$.  By Proposition~\ref{prop:geom}, there exists $n$ sufficiently large that we have $\Prob{N_m \le n+1} > 1 - \epsilon$. Consider the trees $T$ and $\tilde T$ constructed by the above coupling to recursion depth $n+1$, so that $T$ is distributed according to $M$ and $\tilde T$ according to the law of the BCRT. Consider $m$ points picked uniformly in $T$ and $\tilde T$, where we couple the choice of these points in such a way that they fall in subtrees with same label in $T$ and $\tilde T$ (this is completely analogous to the way we couple the branch-points with the random variables $U_\mathbf{i}$ in the previous section).  On the event $\{N_m \le n+1\}$, each branch-point of $\mathcal{S}(m)$ corresponds to a point at which we have glued input trees together.  In particular, the shapes of $\mathcal{S}(m)$ and of the reduced tree $\tilde{\mathcal{S}}(m)$ in $\tilde{T}$ are the same by construction.  Moreover, the lengths of corresponding segments of $\mathcal{S}(m)$ and $\tilde{\mathcal{S}}(m)$ are all made up of sums of scaled distances between pairs of uniform points in trees with the same labels at level $n+1$ and the same scaling factors. (Note that these scaling factors receive appropriate biases from the fact that uniform points have/have not fallen into the corresponding trees, but this affects only the scaling factors and not the underlying trees  since, by construction, the trees and scaling factors are independent.)  By Proposition~\ref{prop:distance}, these distances have the same law in $T$ and $\tilde{T}$.  The result follows since $\epsilon$ was arbitrary and $\tilde{\mathcal{S}}(m) \equidist \mathcal{R}(m)$.
\end{proof}

In view of Theorem~\ref{th:rfdds}, 	Theorem~\ref{th:fxpt} follows.

\section{Convergence to the fixed point: proof of Theorem~\ref{th:cvg}}

\noindent Recall that $M$ is now an arbitrary law on continuum trees. For $n \ge 0$, let $(T_n, d_n, \mu_n) \sim \mathfrak{F}^n M$ and, conditionally on $(T_n, d_n, \mu_n)$, let $V^n_1, V^n_2, \ldots$ be i.i.d.\ points of $T_n$ sampled according to $\mu_n$.  Similarly, let $(\mathbb{T},\d,\m) \sim \mathbb{M}$ and, conditionally on $(\mathbb{T},\d,\m)$, let $V_1, V_2, \ldots$ be i.i.d.\ points of $\mathbb{T}$ sampled according to $\m$.  Write $\hat{\mu}_{n}$ for the law of $d_n(V_1^n, V_2^n)$ and $\hat{\m}$ for the law of $\d(V_1,V_2)$ (which is, of course, Rayleigh).  Let $\mathcal{S}_n(m)$ be the reduced tree of $T_n$ spanned by $V_1^n, \ldots, V_m^n$ and $\mathcal{R}(m)$ be the reduced tree of $\mathbb{T}$ spanned by $V_1, \ldots, V_m$.  Convergence in the Gromov--Prokhorov distance is then equivalent to the convergence 
\[
\mathcal{S}_n(m) \convdist \mathcal{R}(m) \quad \text{as $n \to \infty$}
\]
for each $m \ge 2$ (see Greven, Pfaffelhuber and Winter~\cite{GrevenPfaffelhuberWinter}, or the introduction to Bertoin and Miermont~\cite{BertoinMiermont}). 

We will again use the coupling of Subsection~\ref{sub:coupling} to prove this.  
 Indeed, for fixed $m \ge 3$, we must look to recursion depth $N_m$ in order to separate our $m$ uniform points.   For fixed $\epsilon > 0$, by Proposition~\ref{prop:geom}, we can find $k$ sufficiently large that $\Prob{N_m \le k} > 1 - \epsilon$.  We work on the event $\{N_m \le k\}$. Then, for $n \ge k$, in order to obtain coupled trees distributed as $\mathbb{M}$ and $M_n$ respectively, we need to ``plug in'' $3^{k}$ input trees at level $k$ in the coupling, sampled according to $\mathbb{M}$ and $M_{n-k}$, respectively.  Moreover, as in the proof of Proposition~\ref{prop:redtree}, the lengths of the edges of the reduced trees can then be viewed as sums of scaled distances between uniform points in these trees with distributions $\mathbb{M}$ and $M_{n-k}$. So we need to control the distribution $\hat{\mu}_{n-k}$ of the distance between two uniform points in a tree distributed as $M_{n-k}$.  Note that $\hat{\mu}_{n-k} = \mathfrak{F}_{\mathrm{sm}}^{n-k} \hat{\mu}_{0}$, the $(n-k)$-fold iterate of the smoothing transform $\mathfrak{F}_{\mathrm{sm}}$ applied to the law $\hat{\mu}_{0}$ of the distance between two uniformly sampled points of $T_0 \sim M$.  Theorem 2(b) of Durrett and Liggett~\cite{DurrettLiggett} gives conditions under which repeated applications of the smoothing transform yields convergence to a fixed point.  Recall the function $\nu$ from the proof of Proposition~\ref{prop:distance}.  Then the conditions of Durrett and Liggett's theorem are that (a) $\nu$ has its unique zero in $s \ge 0$ at $s=1$ (b) that $\nu'(1) < 0$ and (c) that the law to which we repeatedly apply $\mathfrak{F}_{\mathrm{sm}}$ should have the same mean as the fixed point.  We already checked (a) and (b) in the course of the proof of Proposition~\ref{prop:distance}.  Moreover, by assumption, $\int_0^{\infty} x \hat{\mu}_{0}(\mathrm d x) = \E{d_0(V_1^0,V_2^0)} = \sqrt{\pi/2} = \int_0^{\infty} x \hat{\m}(\mathrm d x)$, so that (c) also holds.  We conclude that, for fixed $k$, we have $\hat{\mu}_{n-k} = \mathfrak{F}_{\mathrm{sm}}^{n-k} \hat{\mu}_{0} \to \hat{\m}$ as $n \to \infty$.

The edge-lengths in $\mathcal{S}_n(m)$ can then be written as sums of randomly rescaled independent random variables sampled from $\hat{\mu}_{n-k}$. It is then clear (since we use the same random scaling factors in order to construct both) that the edge-lengths of $\mathcal{S}_n(m)$ converge in distribution to those of $\mathcal{R}(m)$ on the event $\{N_m \le k\}$ for any fixed $k \ge 1$.  Since $\epsilon > 0$ was arbitrary, the result follows. \hfill $\Box$

\section{Concluding remarks}
\subsection{Related work}
\noindent As mentioned in Subsection~\ref{sub:BCRTFix}, Aldous~\cite{AldousRecursiveSelfSimilarity} shows that, in a sense, we can ``reverse'' the operator $\fix$. Indeed, we can decompose a BCRT by picking three uniform points and splitting at the branch-point between them; we obtain three independent BCRT's, Brownian-rescaled by $(\Delta_{1},\Delta_{2},\Delta_{3})\sim \mathrm{Dir}(1/2,1/2,1/2)$. Each of these subtrees is doubly marked, one mark being the original uniform point and the other being the former branch-point.  Perhaps a more natural way of phrasing the reversal, which yields only a single mark in each subtree, would be to pick each of the branch-points in the tree with probability given by 6 times the product of the masses of the subtrees into which removal of that branch-point splits the tree.

If we \emph{do} use three uniforms to pick the branch-point then the two marks in each subtree are independent uniform picks from that subtree. This decomposition operation is used recursively by Croydon and Hambly~\cite{CroydonHambly}  to prove that the BCRT is homeomorphic to a certain deterministic fractal with a random self-similar metric, along with the naturally-associated measure. In the course of their proof, they show (Lemma~10(d) of \cite{CroydonHambly}) that all of the randomness in the BCRT is contained in an i.i.d.\ family of $\mathrm{Dir}(1/2,1/2,1/2)$ scaling factors $(\Delta_{\mathbf{i}},\mathbf{i}\in \Sigma)$.

Although we have referred to this decomposition of the BCRT as the reverse of our operator $\fix$, there is, in fact, a rather subtle difference which arises concerning marking and labelling.  The forward version of Croydon and Hambly's splitting operator acts on \emph{doubly uniformly marked trees} and can be paraphrased as follows: take three independent BCRT's, $T_1, T_2, T_3$, each with two independent uniform points, labelled 1 and 2.  Rescale these trees according to the appropriate Dirichlet random vector and glue them together at the points labelled 1.  Now relabel the point labelled 2 in $T_1$ by 1, keep the point labelled 2 in $T_2$ and forget the point labelled 2 in $T_3$ as well as the branch-point just created.  Then this is again a doubly uniformly marked BCRT.  This seems to us a much less natural ``forward'' operation on continuum trees than the one pursued in this paper, but it has the advantage that the recursive decomposition obtained by going backwards does not have any of the labelling issues encountered in Section~\ref{sub:coupling}.  Indeed, in this version there is no randomness in which subtree attaches to which other subtree.

\subsection{Convergence}
\noindent The distributional convergence in Theorem~\ref{th:cvg} is in the sense of the Gromov--Prokhorov distance which, for example, does not distinguish between the BCRT and the BCRT decorated by the independent PPP discussed after Theorem~\ref{th:fxpt}. In particular, this convergence is equivalent to the convergence in distribution of the random finite dimensional distributions. It would be interesting to find conditions under which the convergence holds instead in the stronger Gromov--Hausdorff--Prokhorov sense; in particular, we would need a certain tightness condition to hold (see Corollary 19 of \cite{AldousCRT3}).

\section*{Acknowledgments}
\noindent The question answered in this paper was (to the best of our knowledge) first raised by Nicolas Curien at the YEP VII workshop at EURANDOM in March 2010.  We would like to thank Louigi Addario-Berry and Luc Devroye for inviting us to the Fifth Annual Workshop on Probabilistic Combinatorics and WVD at the Bellairs Institute of McGill University, Barbados, where we began thinking about it, and the Isaac Newton Institute in Cambridge for its invitation in March-April 2015 which enabled us to complete the paper.  We would also like to thank David Croydon for detailed discussions relating to the paper~\cite{CroydonHambly} and Ralph Neininger for discussions about the contraction method. We are grateful to the referee for an extremely thorough reading of the paper which led to considerable improvements in the exposition. C.G.'s research was supported in part by EPSRC Postdoctoral Fellowship EP/D065755/1 and EPSRC grant EP/J019496/1. M.A.\ acknowledges the support of the ERC under the agreement ``ERC StG 208471 - ExploreMap" and the ANR under the agreement ``ANR 12-JS02-001-01 - Cartaplus''.  

\bibliographystyle{amsplain}

\begin{thebibliography}{10}

\bibitem{AbrahamDelmasHoscheit}
R.~Abraham, J.-F. Delmas, and P.~Hoscheit, \emph{A note on the
  {G}romov-{H}ausdorff-{P}rokhorov distance between (locally) compact metric
  measure spaces}, Electron. J. Probab. \textbf{18} (2013), 1--21. \MR{3035742}

\bibitem{ABBG2}
L.~Addario-Berry, N.~Broutin, and C.~Goldschmidt, \emph{Critical random graphs:
  limiting constructions and distributional properties}, Electron. J. Probab.
  \textbf{15} (2010), no.~25, 741--775.

\bibitem{AldousCRT1}
D.~Aldous, \emph{The continuum random tree. {I}}, Ann. Probab. \textbf{19}
  (1991), no.~1, 1--28. \MR{MR1085326 (91i:60024)}

\bibitem{AldousCRT2}
\bysame, \emph{The continuum random tree. {II}. {A}n overview}, Stochastic
  analysis (Durham, 1990), London Math. Soc. Lecture Note Ser., vol. 167,
  Cambridge University Press, Cambridge, 1991, pp.~23--70. \MR{MR1166406
  (93f:60010)}

\bibitem{AldousCRT3}
\bysame, \emph{The continuum random tree. {III}}, Ann. Probab. \textbf{21}
  (1993), no.~1, 248--289. \MR{MR1207226 (94c:60015)}

\bibitem{AldousRecursiveSelfSimilarity}
\bysame, \emph{Recursive self-similarity for random trees, random
  triangulations and {B}rownian excursion}, Ann. Probab. \textbf{22} (1994),
  no.~2, 527--545. \MR{MR1288122 (95i:60007)}

\bibitem{BertoinMiermont}
J.~Bertoin and G.~Miermont, \emph{The cut-tree of large {G}alton-{W}atson trees
  and the {B}rownian {CRT}}, Ann. Appl. Probab. \textbf{23} (2013), no.~4,
  1469--1493. \MR{3098439}

\bibitem{BroutinMarckert}
N.~Broutin and J.-F. Marckert, \emph{Asymptotics of trees with a prescribed
  degree sequence and applications}, Random Structures Algorithms \textbf{44}
  (2014), no.~3, 290--316. \MR{3188597}

\bibitem{CroydonHambly}
D.~Croydon and B.~Hambly, \emph{Self-similarity and spectral asymptotics for
  the continuum random tree}, Stochastic Process. Appl. \textbf{118} (2008),
  no.~5, 730--754.

\bibitem{CurienHaasKortchemski}
N.~Curien, B.~Haas, and I.~Kortchemski, \emph{The {CRT} is the scaling limit of
  random dissections}, arXiv:1305.3534 [math.PR], 2013.

\bibitem{DurrettLiggett}
R.~Durrett and T.M. Liggett, \emph{Fixed points of the smoothing
  transformation}, Probab. Theory Related Fields \textbf{64} (1983), no.~3,
  275--301.

\bibitem{GrevenPfaffelhuberWinter}
A.~Greven, P.~Pfaffelhuber, and A.~Winter, \emph{Convergence in distribution of
  random metric measure spaces ({$\Lambda$}-coalescent measure trees)}, Probab.
  Theory Related Fields \textbf{145} (2009), no.~1-2, 285--322. \MR{2520129
  (2011c:60008)}

\bibitem{HaasMiermont1}
B.~Haas and G.~Miermont, \emph{The genealogy of self-similar fragmentations
  with negative index as a continuum random tree}, Electron. J. Probab.
  \textbf{9} (2004), 57--97, paper no. 4.

\bibitem{HaasMiermont2}
\bysame, \emph{Scaling limits of {M}arkov branching trees with applications to
  {G}alton--{W}atson and random unordered trees}, Ann. Probab. \textbf{40}
  (2012), no.~6, 2589--2666.

\bibitem{LeGallSurvey}
J.-F. Le~Gall, \emph{Random trees and applications}, Probab. Surv. \textbf{2}
  (2005), 245--311. \MR{MR2203728 (2007h:60078)}

\bibitem{LeGall06}
\bysame, \emph{Random real trees}, Ann. Fac. Sci. Toulouse Math. \textbf{15}
  (2006), no.~1, 35--62.

\bibitem{MarckertMiermont}
J.-F. Marckert and G.~Miermont, \emph{The {CRT} is the scaling limit of
  unordered binary trees}, Random Structures Algorithms \textbf{38} (2010),
  no.~4, 467--501.

\bibitem{Miermont}
G.~Miermont, \emph{Invariance principles for spatial multitype
  {G}alton-{W}atson trees}, Ann. Inst. Henri Poincar\'e Probab. Stat.
  \textbf{44} (2008), no.~6, 1128--1161. \MR{2469338 (2010a:60292)}

\bibitem{NeiningerRuschendorf}
R.~Neininger and L.~R\"uschendorf, \emph{A general limit theorem for recursive
  algorithms and combinatorial structures}, Ann. Appl. Probab. \textbf{14}
  (2004), no.~1, 378--418.

\bibitem{RalphHenning}
Ralph Neininger and Henning Sulzbach, \emph{On a functional contraction
  method}, Ann. Probab. \textbf{43} (2015), no.~4, 1777--1822. \MR{3353815}

\bibitem{PanagiotouStuflerWeller}
K.~Panagiotou, B.~Stufler, and K.~Weller, \emph{Scaling limits of random graphs
  from subcritical classes}, arXiv preprint
  \url{http://arxiv.org/abs/1411.1865}, 2014.

\bibitem{Rosler92}
U.~R\"osler, \emph{A fixed point theorem for distributions}, Stochastic
  Process. Appl. \textbf{42} (1992), no.~2, 195--214.

\bibitem{RoslerRuschendorf}
U.~R\"osler and L.~R\"uschendorf, \emph{The contraction method for recursive
  algorithms}, Algorithmica \textbf{29} (2001), no.~1, 3--33.

\bibitem{Stufler}
B.~Stufler, \emph{The continuum random tree is the scaling limit of unlabelled
  unrooted trees}, arXiv preprint, \url{http://arxiv.org/abs/1412.6333}, 2014.

\end{thebibliography}

\providecommand{\bysame}{\leavevmode\hbox to3em{\hrulefill}\thinspace}
\providecommand{\MR}{\relax\ifhmode\unskip\space\fi MR }
% \MRhref is called by the amsart/book/proc definition of \MR.
\providecommand{\MRhref}[2]{%
  \href{http://www.ams.org/mathscinet-getitem?mr=#1}{#2}
}
\providecommand{\href}[2]{#2}

\end{document}